\def\@setcopyright{\@empty}
\newcommand{\arr}[2]{{{#1}_1,\dots,{#1}_{#2}}}
\newcommand{\Si}[1]{(1-#1^2)}
\newcommand{\Co}[1]{\cos^4\frac{#1}2}
\newcommand{\Lmu}{L_{1,2}}
\newcommand{\allp}{1\le p\le\infty}
\newcommand{\Dx}{D_{x,2,2}}
\newcommand{\AD}{AD^r(p,\alpha)}
\newcommand{\K}{K_r(f,\delta)_{p,\alpha}}
\newcommand{\Ha}[2]{H^{#1}\left(#2\right)}
\newcommand{\Hd}[2]{H_{\delta}^{#1}\left(#2\right)}
\newcommand{\kap}{\varkappa(\delta)}
\newcommand{\sincosv}{%
  \left(\sin\frac v2\right)^{-1}
  \left(\cos\frac v2\right)^{-9}}
\newcommand{\sincosu}{%
  \left(\sin\frac u2\right)\left(\cos\frac u2\right)^9}
\newcommand{\Px}[1]{P_{#1}^{(2,2)}}
\newcommand{\E}{E_n(f)_{p,\alpha}}
\newcommand{\Epar}[2]{E_{#1}\left(#2\right)_{p,\alpha}}
\newcommand{\T}[3]{\tau_{#1}^{#2}\left(#3\right)}
\newcommand{\hatT}[3]{\hat\tau_{#1}^{#2}\left(#3\right)}
\newcommand{\Dl}[3]{\Delta_{#1}^{#2}\left(#3\right)}
\newcommand{\w}{\hat\omega_r(f,\delta)_{p,\alpha}}
\newcommand{\wpar}[1]{\hat\omega_r\left(#1\right)_{p,\alpha}}
\newcommand{\Kpar}[1]{K_r\left(#1\right)_{p,\alpha}}
\newcommand{\norm}[1]{\left\|#1\right\|_{p,\alpha}}
\newcommand{\Dy}{D_{y,0,4}}
\newcommand{\Py}[1]{P_{#1}^{(0,4)}}
\newcommand{\Lp}{L_{p,\alpha}}
\newcommand{\gd}{g_{\delta,r}(x)}
\newcommand{\numericset}[1]{\mathbb #1}
\newcommand{\numN}{\numericset N}
\newtheorem{thm}{Теорема}[section]
\newtheorem{lmm}{Лемма}[section]
\newtheorem*{cor}{Следствие}
\newcounter{const}
\numberwithin{const}{thm}
\numberwithin{const}{lmm}
\newcommand{\Cn}[1][]{%
  \stepcounter{const}C_{\theconst}%
  \@ifnotempty{#1}{%
    \newcounter{#1}\setcounter{#1}{\arabic{const}}}}
\newcommand{\lastC}{C_{\theconst}}
\newcommand{\prevC}[1][1]{%
	{\countdef\n=255
	 \n=\theconst
	 \advance\n by-#1
	 C_{\number\n}}}
\numberwithin{equation}{section}
\renewcommand{\theconst}{\arabic{const}}
\begin{document}

\title[О связи между $r$-ым обобщенным модулем гладкости\dots]
	{О связи между $r$-ым обобщенным модулем гладкости
		и наилучшими приближениями алгебраическими многочленами}
\author{M.~K.\ Потапов}
\address{M.~K.\ Потапов\\
	Механико-математический факультет\\
	Московский Государственный Университет им.\ Ломоносова\\
	Москва 117234\\
	Россия}
\author{Ф.~M.\ Бериша}
\address{Ф.~M.\ Бериша\\
	Механико-математический факультет\\
	Московский Государственный Университет им.\ Ломоносова\\
	Москва 117234\\
	Россия}
\curraddr{F.~M.\ Berisha\\
	Faculty of Mathematics and Sciences\\
	University of Prishtina\\
	N\"ena Terez\"e~5\\
	10000 Prishtina\\
	Kosovo}
	\email{faton.berisha@uni-pr.edu}

\thanks{Работа выполнена
	при поддержке Российского Фонда Фундаментальных Исследования
	(грант No.\ 97--01--00010)
	и программы поддержки ведущих научных школ
	(грант No.\ 96/97--15--96073).}

\keywords{\foreignlanguage{english}{%
  Generalised modulus of smoothness,
	asymmetric operator of generalised translation,
	Jackson theorem, converse theorem,
	best approximations by algebraic polynomials}}
\subjclass{Primary 41A35, Secondary 41A50, 42A16. (UDK 517.5.)}
\date{}

\begin{abstract}
	В данной работе вводится несимметричный оператор обобщенного сдвига,
	с его помощью определяется обобщенный модуль гладкости
	и для него доказывается прямая и обратная теоремы
	теории приближений.
\end{abstract}

\maketitle

\begin{otherlanguage}{english}
\begin{abstract}
	In this paper an asymmetrical operator of generalised translation
	is introduced,
	the generalised modulus of smoothness is defined by its means
	and the direct and inverse theorems in approximation theory
	are proved for that modulus.
\end{abstract}
\end{otherlanguage}

\section{Введение}

Для $2\pi$--периодических функций хорошо известны связи
между $r$-ым обычным модулем гладкости
$\omega_r(f,\delta)_{p*}$
функции $f\in L_{p*}$
с ее наилучшими приближениями $E_n(f)_{p*}$
тригонометрическими полиномами
порядка не выше чем, $n-1$:
\begin{equation}\label{eq:jackson-periodic}
	\Cn E_n(f)_{p*}\le \omega_r\left(f,\frac1n\right)_{p*}
	\le\Cn\frac1{n^r}\sum_{\nu=1}^n \nu^{r-1}E_\nu(f)_{p*},
\end{equation}
где~$\prevC$ и~$\lastC$ --- положительные постоянные,
не зависящие от~$f$ и~$n$ $(n\in\numN)$.

При рассмотрении непериодических функций,
заданных на конечном отрезке вещественной оси,
уже не удается получить такие же связи
между обычными модулями гладкости этих функций
и их наилучшими приближениями алгебраическими многочленами.

Однако полная аналогия с $2\pi$ периодическим случаем
имеет место тогда,
когда обычный модуль гладкости
заменен обобщенным модулем гладкости
(см.\ например~\cite{pawelke:acta-72,butzer-s-w:c-80,
potapov:vestnik-83,potapov-f:trudy-85}).

В этой работе
доказывается аналог неравенства~\eqref{eq:jackson-periodic}
для $r$-го обобщенного модуля гладкости,
определяемого при помощи одного несимметричного оператора обобщенного сдвига.

\section{Определение обобщенного модуля гладкости}

Обозначим через $L_p$, $1\le p<\infty$, множество функций~$f$,
измеримых по Лебегу и суммируемых в~$p$-й степень
на отрезке $[-1,1]$,
а через~$L_\infty$ обозначим множество функций,
непрерывных на отрезке $[-1,1]$,
причем
\begin{displaymath}
	\|f\|_p=
	\begin{cases}
	\left(\int_{-1}^1|f(x)|^p\,dx\right)^{1/p},
	  & \text{если $1\le p<\infty$},\\
	\max_{-1\le x\le1}|f(x)|,
	  & \text{если $p=\infty$}.
	\end{cases}
\end{displaymath}

Через~$\Lp$ обозначим множество функций~$f$,
таких, что $f(x)\Si{x}^\alpha\in L_p$,
причем
\begin{displaymath}
	\norm f=\left\|f(x)\Si{x}^\alpha\right\|_p.
\end{displaymath}

Через $\E$ обозначим наилучшее приближение функций~$f$
при помощи алгебраических многочленов степени не выше, чем $n-1$,
в метрике~$\Lp$,
т.е.
\begin{displaymath}
	\E=\inf_{P_n}\norm{f-P_n},
\end{displaymath}
где~$P_n$ --- алгебраический многочлен
степени не выше,
чем $n-1$.

Для суммируемой функции~$f$ введем оператор обобщенного сдвига
по правилу
\begin{displaymath}
	\hatT t{}{f,x}=\frac1 {\pi\Si x\Co t}
	\int_0^\pi B_{\cos t}(x,\cos\varphi,R)
	f(R)\,d\varphi,
\end{displaymath}
где
\begin{align}\label{eq:R-B}
	R 			&=x\cos t-\sqrt{1-x^2}\sin t\cos\varphi, \notag\\
	B_y(x,z,R) 	&=2\Bigl(\sqrt{1-x^2}y+xz\sqrt{1-y^2}\\
				& \quad
		+\sqrt{1-x^2}(1-y)\Si z\Bigr)^2-\Si R. \notag
\end{align}

При помощи этого оператора обобщенного сдвига
определим~$r$-ю обобщенную разность по правилу
\begin{align*}     
	\Dl t1{f,x} &=\Dl t{}{f,x}=\hatT t{}{f,x}-f(x),\\
	\Dl{\arr t r}r{f,x} &=\Dl{t_r}{}{\Dl{\arr t{r-1}}{r-1}{f,x},x}
		\quad (r=2,3,\dots).
\end{align*}
и, для функции $f\in\Lp$,
$r$-й обобщенный модуль гладкости
по правилу
\begin{displaymath}
	\w=\sup_{\substack{|t_j|\le\delta\\j=1,2,\dots,r}}
	  \norm{\Dl{\arr t r}r{f,x}}
	\quad(r=1,2,\dotsc).
\end{displaymath}

Полагая $y=\cos t$, $z=\cos\varphi$ в операторе
$\hatT t{}{f,x}$,
обозначим его через $\T y{}{f,x}$ и запишем в виде
\begin{displaymath}
	\T y{}{f,x}=\frac4{\pi\Si x(1+y)^2}
	\int_{-1}^1 B_y(x,z,R)f(R)\frac{dz}{\sqrt{1-z^2}},
\end{displaymath}
где~$R$ и~$B_y(x,z,R)$ определены формулами~\eqref{eq:R-B}.

Определим $r$-й оператор обобщенного сдвига по правилу
\begin{align*}
	\T{y}1{f,x} &=\T y{}{f,x},\\
	\T{\arr y r}r{f,x} &=\T{y_r}{}{\T{\arr t{r-1}}{r-1}{f,x},x}
		\quad (r=2,3,\dots).
\end{align*}

Обозначим через~$D_{x,\nu,\mu}$ оператор дифференцирования,
определяемый по правилу
\begin{displaymath}
	D_{x,\nu,\mu}=\Si{x}\frac{d^2}{dx^2}
	+(\mu-\nu-(\nu+\mu+2)x)\frac d{dx}.
\end{displaymath}
Ясно, что
\begin{displaymath}
	D_{x,\nu,\mu}=(1-x)^{-\nu}(1+x)^{-\mu}\frac d{dx}
	(1-x)^{\nu+1}(1+x)^{\mu+1}\frac d{dx}.
\end{displaymath}

Будем обозначать
\begin{align*}
	D_{x,\nu,\mu}^1f(x) &=D_{x,\nu,\mu}f(x),\\
	D_{x,\nu,\mu}^r f(x)
		&=D_{x,\nu,\mu}(D_{x,\nu,\mu}^{r-1} f(x))
		\quad (r=2,3,\dotsc).
\end{align*}

Будем писать, что $f(x)\in\AD$,
если $f\in\Lp$, $f(x)$ имеет на каждом отрезке $[a,b]\subset(-1,1)$
абсолютно непрерывную $2r-1$ производную
$\frac{d^{2r-1}}{dx^{2r-1}}f(x)$
и $\Dx^l f(x)\in\Lp$ $(l=0,1,\dots,r)$.

Обозначим через
\begin{displaymath}
	\K=\inf_{g\in\AD}\left(\norm{f-g}
	+\delta^{2r}\norm{\Dx^r g(x)}\right)
\end{displaymath}
$K$--функционал типа Петре,
интерполирующий между пространствами $\Lp$ и $\AD$.

Для $f\in\Lmu$ обозначим через $\Ha{}{f,x}$ и $\Hd{}{f,x}$
следующие операторы
\begin{displaymath}
	\Ha{}{f,x}=-\int_0^x\Si{y}^{-3}
	\int_y^1\left(f(z)-\frac{c_1}{c_0}\right)\Si{z}^2\,dz\,dy,
\end{displaymath}
где $c_1=\int_{-1}^1f(z)\Si{z}^2\,dz$,
$c_0=\int_{-1}^1\Si{z}^2\,dz$;
и
\begin{multline*}
	\Hd{}{f,x}=\frac1{\kap}\int_0^\delta\sincosv\\
	\times\int_0^v\hatT u{}{f,x}\sincosu\,du\,dv,
\end{multline*}
где
\begin{displaymath}
	\kap=\int_0^\delta\sincosv\int_0^v\sincosu\,du\,dv.
\end{displaymath}

Определим $r$-ую степень оператора~$H$ по правилу
\begin{multline*}
	\begin{aligned}
		\Ha1{f,x}  &=\Ha{}{f,x},\\
		\Ha r{f,x} &=\Ha{}{\Ha{r-1}{f,x},x}=-\int_0^x\Si{y}^{-3}
	\end{aligned}
	\\
		\times\int_y^1
			\left(
				\Ha{r-1}{f,z}-\frac{c_r}{c_0}
			\right)
			\Si{z}^2\,dz\,dy \quad (r=2,3,\dotsc),
\end{multline*}
где $c_r=\int_{-1}^1\Ha{r-1}{f,z}\Si{z}^2\,dz$;
и $r$-ую степень оператора~$H_\delta$ по правилу
\begin{align*}
	\Hd1{f,x}  &=\Hd{}{f,x},\\
	\Hd r{f,x} &=\Hd{}{\Hd{r-1}{f,x},x} \quad(r=2,3,\dotsc).
\end{align*}

Будем обозначать через $P_n^{(\nu,\mu)}(x)$ $(n=0,1,\dotsc)$
многочлены Якоби,
т.е. многочлены степени~$n$
ортогональные друг другу с весом $(1-x)^\nu(1+x)^\mu$
на отрезке $[-1,1]$
и нормированные условием
$P_n^{(\nu,\mu)}(1)=1$ $(n=0,1,\dotsc)$.

Через~$a_n(f)$ обозначим коэффициенты Фурье--Якоби
функции $f\in\Lmu$
по системе многочленов Якоби
$\left\{\Px n(x)\right\}_{n=0}^\infty$,
т.е.
\begin{displaymath}
	a_n(f)=\int_{-1}^1f(x)\Px n(x)\Si{x}^2\,dx
	\quad
	(n=0,1,\dotsc).
\end{displaymath}

\section{Вспомогательные утверждения}

\begin{lmm}[\cite{potapov:vestnik-60a}]\label{lm:bernshtein-markov}
	Пусть~$P_n(x)$--алгебраический многочлен
	степени не выше, чем $n-1$,
	$\allp$, $\rho\ge0$;
	\begin{alignat*}2
		\alpha &>-\frac1p	&\quad &\text{при $1\le p<\infty$},\\
		\alpha &\ge0 		&\quad &\text{при $p=\infty$}.
	\end{alignat*}
	Тогда справедливы неравенства
	\begin{gather*}
		\left\|P'_n(x)\right\|_{p,\alpha+\frac12}
			\le\Cn n\norm{P_n},\\
		\norm{P_n}
			\le\Cn n^{2\rho}\left\|P_n\right\|_{p,\alpha+\rho},
	\end{gather*}
	где постоянные~$\prevC$ и~$\lastC$
	не зависят от~$n$ $(n\in\numN)$.
\end{lmm}

\begin{cor}
	Пусть~$P_n(x)$ --- алгебраический многочлен
	степени не выше, чем $n-1$,
	$\allp$;
	\begin{alignat*}2
		\alpha &>-\frac1p	&\quad &\text{при $1\le p<\infty$},\\
		\alpha &\ge0 		&\quad &\text{при $p=\infty$}.
	\end{alignat*}
	Тогда
	\begin{displaymath}
		\norm{\Dx{P_n(x)}}\le Cn^2\norm{P_n(x)},
	\end{displaymath}
	где постоянная~$C$ не зависит от~$n$ $(n\in\numN)$.
\end{cor}

\begin{proof}
	Так как,
	\begin{displaymath}
		\norm{\Dx{P_n(x)}}\le\left\|P''_n(x)\right\|_{p,\alpha+1}
		+6\norm{P'_n(x)},
	\end{displaymath}
	то, применяя дважды лемму~\ref{lm:bernshtein-markov},
	получаем утверждение следствия.
\end{proof}

\begin{lmm}[\cite{p-berisha:anal-99}]\label{lm:properties-tau}
	Оператор~$\tau_y$ обладает следующими свойствами
	\begin{enumerate}
	\item\label{it:properties-tau-1}
		Оператор $\T y{}{f,x}$
		линеен по~$f$;
	\item\label{it:properties-tau-2}
		$\T1{}{f,x}=f(x)$;
	\item\label{it:properties-tau-3}
		$\T y{}{\Px n,x}=\Px n(x)\Py n(y)$ $(n=0,1,\dotsc)$;
	\item\label{it:properties-tau-4}
		$\T y{}{1,x}=1$;
	\item\label{it:properties-tau-6}
		$a_n\left(\T y{}{f,x}\right)=a_n(f)\Py n(y)$
		$(n=0,1,\dotsc)$.
	\end{enumerate}
\end{lmm}

\begin{lmm}[\cite{p-berisha:anal-99}]\label{lm:properties-tau-5}
	Пусть $g(x)\T y{}{f,x}\in\Lmu$ для любого~$y$.
	Тогда справедливо равенство
	\begin{displaymath}
		\int_{-1}^1f(x)\T y{}{g,x}\Si{x}^2\,dx
		=\int_{-1}^1g(x)\T y{}{f,x}\Si{x}^2\,dx.
	\end{displaymath}
\end{lmm}

\begin{lmm}[\cite{p-berisha:anal-99}]\label{lm:bound-tau}
	Пусть даны числа~$p$ и~$\alpha$ такие,
	что $\allp$;
	\begin{alignat*}2
		\frac12      &<\alpha\le1
		  &\quad &\text{при $p=1$},\\
		1-\frac1{2p} &<\alpha<\frac32-\frac1{2p}
		  &\quad &\text{при $1<p<\infty$},\\
		1            &\le\alpha<\frac32
		  &\quad &\text{при $p=\infty$}.
	\end{alignat*}
	Пусть $f\in\Lp$.
	Тогда 
	справедливо неравенство
	\begin{displaymath}
		\norm{\hatT t{}{f,x}}\le C\frac1{\Co t}\norm f,
	\end{displaymath}
	где постоянная~$C$ не зависит от~$f$ и~$t$.
\end{lmm}

\begin{lmm}[\cite{p-berisha:anal-99}]\label{lm:tauuDx}
	Пусть функция~$f(x)$ имеет абсолютно непрерывную
	на каждом отрезке $[a,b]\subset(-1,1)$
	производную~$f'(x)$.
	Тогда для почти всех $x\in[-1,1]$
	выполнены следующие равенства
	\begin{multline*}
		\hatT t{}{f,x}-f(x)\\
		=\int_0^t\sincosv\int_0^v\hatT u{}{\Dx f,x}\sincosu\,du\,dv
	\end{multline*}
	и
	\begin{multline*}
		\hatT t{}{f,x}-\hatT {\pi/2}{}{f,x}\\
		=-\int_{\pi/2}^t\sincosv
		  \int_v^\pi\hatT u{}{\Dx f,x}\sincosu\,du\,dv.
	\end{multline*}
\end{lmm}

\begin{lmm}\label{lm:Dtau}
	Пусть функция~$f(x)$ имеет на каждом отрезке
	$[a,b]\subset(-1,1)$
	абсолютно непрерывную $2r-1$ производную
	$\frac{d^{2r-1}}{dx^{2r-1}}f(x)$.
	Тогда
	\begin{enumerate}
	\item\label{it:Dtau1}
		При фиксированном~$y$
		функция $\T y{}{f,x}$ имеет на каждом отрезке
		$[c,d]\subset(-1,1)$
		абсолютно непрерывную $2r-1$ производную по~$x$
		$\frac{d^{2r-1}}{dx^{2r-1}}\T y{}{f,x}$.
	\item\label{it:Dtau2}
		При фиксированном~$x$
		функция $\T y{}{f,x}$ имеет на каждом отрезке
		$[c,d]\subset(-1,1)$
		абсолютно непрерывную $2r-1$ производную по~$y$
		$\frac{d^{2r-1}}{dy^{2r-1}}\T y{}{f,x}$.
	\item\label{it:Dtau3}
		Для почти всех~$x$ и~$y$
		справедливы равенства
		\begin{displaymath}
			\T y{}{\Dx f,x}=\Dx\T y{}{f,x}=\Dy\T y{}{f,x}.
		\end{displaymath}
	\end{enumerate}
\end{lmm}

\begin{proof}
	Докажем утверждение~\ref{it:Dtau1}.
	Для $r=1$ оно доказано в работе~\cite{p-berisha:anal-99}.
	Обозначим
	\begin{displaymath}
		\varphi(x)=\frac{B_y(x,z,R)}{\Si{x}(1+y)^2\sqrt{1-z^2}}f(R),
	\end{displaymath}
	где~$R$ и~$B_y(x,z,R)$ даны формулами~\eqref{eq:R-B}.
	Применяя индукцию, можно доказать,
	что для $l=1,\dots,2r-1$ имеем
	\begin{multline*}
		\frac{d^l}{dx^l}\varphi(x)=\varphi^{(l)}(x)\\
		=\frac1{(1+y)^2\sqrt{1-z^2}}\sum_{k=0}^l\binom rk
			\left(\frac{d^{l-k}}{dx^{l-k}}\frac{B_y(x,z,R)}{1-x^2}\right)
			\frac{d^k}{dx^k}f(R),
	\end{multline*}
	где
	\begin{displaymath}
		\frac{d^k}{dx^k}f(R)=\sum_{\nu=1}^k\frac{d^\nu f(R)}{dR^\nu}
			\sum_{
			   \substack{\mu_1\ge\dots\ge\mu_\nu\ge0\\
				  \mu_1+\dots+\mu_\nu=k}}
			   \alpha_k\prod_{j=1}^k\frac{d^{\mu_j}R}{dx^{\mu_j}}.
	\end{displaymath}
	
	Аналогичным рассуждением как в случае $r=1$
	(см.~\cite{p-berisha:anal-99})
	доказывается,
	что функция~$\varphi^{(l)}(x)$ абсолютно непрерывна
	на каждом отрезке
	$[c,d]\subset(-1,1)$ $(l=1,\dots,2r-1)$.
	Воспользовавшись теоремой Лебега,
	при фиксированных~$y$ и~$z$,
	получаем, что существует конечная производная
	$\frac{d^{2r-1}}{dx^{2r-1}}\T y{}{f,x}$
	--- абсолютно непрерывная на каждом отрезке $[c,d]\subset(-1,1)$.
	
	Используя симметричность~$R$ по~$x$ и~$y$,
	аналогичным рассуждением
	доказывается абсолютная непрерывность функции
	$\frac{d^{2r-1}}{dy^{2r-1}}\T y{}{f,x}$
	при фиксированном~$x$.
	
	Утверждение~\ref{it:Dtau3}
	доказано в работе~\cite{p-berisha:anal-99}.
	
	Лемма~\ref{lm:Dtau} доказана.
\end{proof}

\begin{lmm}\label{lm:cr:Dtau}
	Пусть функция~$f(x)$ имеет
	на каждом отрезке $[a,b]\subset(-1,1)$
	абсолютно непрерывную $2l-1$ производную
	$\frac{d^{2l-1}}{dx^{2l-1}}f(x)$.
	Тогда для почти всех~$x$ и~$y$
	справедливы равенства
	\begin{displaymath}
		\T{\arr y r}r{\Dx^l f,x}=\Dx^l\T{\arr y r}r{f,x}
		\quad (r=1,2,\dotsc).
	\end{displaymath}
\end{lmm}

\begin{proof}
	При $r=l=1$ равенство леммы
	следует из леммы~\ref{lm:Dtau}.
	
	Пусть $l\ge2$, $r=1$.
	Ясно, что $\Dx^{l-1}f(x)$ имеет абсолютно непрерывную
	на каждом отрезке $[a,b]\subset(-1,1)$
	производную $\frac d{dx}\Dx^{l-1}f(x)$.
	Поэтому, из леммы~\ref{lm:Dtau} следует,
	что
	\begin{displaymath}
		\T{y_1}{}{\Dx^l f,x}=\Dx\T{y_1}{}{\Dx^{l-1}f,x}.
	\end{displaymath}
	Применяя это равенство $l$~раз получим, что
	\begin{displaymath}
		\T{y_1}{}{\Dx^l f,x}=\Dx^l\T{y_1}{}{f,x}.
	\end{displaymath}
	Значит, равенство леммы справедливо
	при любых $l\in\numN$ и $r=1$.
	
	Теперь, применяя индукцию,
	нетрудно доказать утверждение леммы
	при любых натуральных~$r$ и~$l$.
	
	Лемма~\ref{lm:cr:Dtau} доказана.
\end{proof}

\begin{lmm}\label{lm:w-Dx}
	Пусть даны числа~$p$, $\alpha$ и~$r$ такие,
	что $\allp$,
	$r\in\numN$;
	\begin{alignat*}2
		\frac12      &<\alpha\le1
		  &\quad &\text{при $p=1$},\\
		1-\frac1{2p} &<\alpha<\frac32-\frac1{2p}
		  &\quad &\text{при $1<p<\infty$},\\
		1            &\le\alpha<\frac32
		  &\quad &\text{при $p=\infty$}.
	\end{alignat*}
	Пусть $g(x)\in\AD$.
	Для $0\le\delta<\pi$ справедливо неравенство
	\begin{displaymath}
		\wpar{g,\delta}\le C\frac1{(\cos\delta/2)^{4r}}\delta^{2r}
		\norm{\Dx^r g(x)},
	\end{displaymath}
	где постоянная~$C$ не зависит от~$g$ и~$\delta$.
\end{lmm}

\begin{proof}
	Докажем сначала,
	что при $|t_i|<\pi$ $(i=1,\dots,r)$
	справедливо неравенство
	\begin{equation}\label{eq:Dl-Dx}
		\norm{\Dl{\arr t r}r{g,x}}
		\le\frac\Cn{\prod_{i=1}^r \Co{t_i}}
			t_1^2\dots t_r^2\norm{\Dx^r g(x)},
	\end{equation}
	где постоянная~$\lastC$
	не зависит от~$g$ и~$t_i$ $(i=1,\dots,r)$.
	
	Для $r=1$ неравенство~\eqref{eq:Dl-Dx}
	доказано в работе~\cite{p-berisha:anal-99}.
	
	Предположим, что справедливо неравенство~\eqref{eq:Dl-Dx}.
	Так как и при доказательстве для $r=1$,
	только взяв~$\Dl{\arr t r}r{g,x}$ вместо~$g(x)$,
	учитывая, что из лемм~\ref{lm:Dtau} и~\ref{lm:bound-tau}
	вытекает $\Dl{\arr t r}r{g,x}\in AD^{r+1}(p,\alpha)$,
	получим
	\begin{displaymath}
		\norm{\Dl{\arr t{r+1}}{r+1}{g,x}}
		\le\frac\Cn{\Co{t_{r+1}}}t_{r+1}^2
		  \norm{\Dx\Dl{\arr t r}r{g,x}}.
	\end{displaymath}
	Применяя лемму~\ref{lm:cr:Dtau} и предположение леммы,
	получаем, что
	\begin{displaymath}
		\norm{\Dl{\arr t{r+1}}{r+1}{g,x}}
		\le\frac\Cn{\Co{t_{r+1}}}t_{r+1}^2\norm{\Dl{\arr t r}r{\Dx g,x}}.
	\end{displaymath}
	
	На основании индукции, учитывая, что $\Dx g(x)\in\AD$,
	получаем, что неравенство~\eqref{eq:Dl-Dx} справедливо.
	
	Переходя в~\eqref{eq:Dl-Dx} к точной верхней грани
	по всем~$t_i$, $|t_i|<\delta$ $(i=1,\dots,r)$,
	получим неравенство леммы.
	
	Лемма~\ref{lm:w-Dx} доказана.
\end{proof}

\begin{lmm}\label{lm:H-Lp}
	Пусть даны числа~$p$ и~$\alpha$ такие,
	что $\allp$;
	\begin{alignat*}2
		-1 		&<\alpha\le2 		 &\quad &\text{при $p=1$},\\
		-\frac1p &<\alpha<3-\frac1p &\quad &\text{при $1<p<\infty$},\\
		0 			&\le\alpha<3 		 &\quad &\text{при $p=\infty$}.
	\end{alignat*}
	Тогда если $f\in\Lp$,
	то $\Ha{}{f,x}\in\Lp$.
\end{lmm}

\begin{proof}
	Нетрудно заметить, что при условиях леммы $f\in\Lmu$.
	Значит существует $\Ha{}{f,x}$.
	
	Для $1\le p<\infty$ обозначим
	\begin{displaymath}
		I=\norm{\Ha{}{f,x}}^p
		=\int_{-1}^1\Si{x}^{p\alpha}|\Ha{}{f,x}|^p\,dx.
	\end{displaymath}
	
	Пусть $p=1$.
	Рассмотрим
	\begin{multline*}
		I_1=\int_0^1\Si{x}^\alpha|\Ha{}{f,x}|\,dx\\
		\le\int_0^1\Si{x}^\alpha\int_0^x\Si{y}^{-3}
			\int_y^1\Si{z}^2
			  \left|f(z)-\frac{c_1}{c_0}\right|\,dz\,dy\,dx.
	\end{multline*}
	Ясно, что
	\begin{displaymath}
		I_1\le\Cn\int_0^1(1-x)^\alpha\int_0^x(1-y)^{-3}
		\int_y^1(1-z)^2\left|f(z)-\frac{c_1}{c_0}\right|\,dz\,dy\,dx.
	\end{displaymath}
	Поменяв пределы интегрирования, учитывая,
	что $-1<\alpha\le2$,
	имеем
	\begin{multline*}
		I_1\le\lastC\int_0^1(1-z)^2\left|f(z)-\frac{c_1}{c_0}\right|
			  \int_0^z(1-y)^{-3}\int_y^1(1-x)^\alpha\,dx\,dy\,dz\\
		=\frac{\lastC}{\alpha+1}\int_0^1(1-z)^2
			  \left|f(z)-\frac{c_1}{c_0}\right|
			  \int_0^z(1-y)^{\alpha-2}\,dy\,dz\\
		\le\Cn\int_0^1(1-z)^\alpha z
			\left|f(z)-\frac{c_1}{c_0}\right|\,dz
		\le\lastC\left\|f-\frac{c_1}{c_0}\right\|_{1,\alpha}.
	\end{multline*}
	Поскольку $f\in L_{1,\alpha}$ и $\alpha>-1$,
	то
	\begin{displaymath}
		I_1<\infty.
	\end{displaymath}
	
	Рассмотрим
	\begin{multline*}
		I_2=\int_{-1}^0\Si{x}^\alpha|\Ha{}{f,x}|\,dx\\
		=\int_{-1}^0\Si{x}^\alpha
		  \left|\int_0^x\Si{y}^{-3}
			  \int_y^1\Si{z}^2
				\left(f(z)-\frac{c_1}{c_0}\right)\,dz\,dy
			\right|\,dx.
	\end{multline*}
	Из определения~$c_1$ и~$c_0$ следует, что
	\begin{displaymath}
		\int_{-1}^1\Si{z}^2\left(f(z)-\frac{c_1}{c_0}\right)\,dz=0.
	\end{displaymath}
	Поэтому
	\begin{equation}\label{eq:y-1-fz}
		\int_y^1\Si{z}^2\left(f(z)-\frac{c_1}{c_0}\right)\,dz
		=-\int_{-1}^y\Si{z}^2
		  \left(f(z)-\frac{c_1}{c_0}\right)\,dz.
	\end{equation}
	Отсюда вытекает, что
	\begin{displaymath}
		I_2\le\Cn\int_{-1}^0(1+x)^\alpha\int_x^0(1+y)^{-3}
		  \int_{-1}^y(1+z)^2
			\left|f(z)-\frac{c_1}{c_0}\right|\,dz\,dy\,dx.
	\end{displaymath}
	Меняя пределы интегрирования получаем
	\begin{displaymath}
		I_2\le\lastC\int_{-1}^0(1+z)^2
		  \left|f(z)-\frac{c_1}{c_0}\right|
		  \int_z^0(1+y)^{-3}\int_{-1}^y (1+x)^\alpha\,dx\,dy\,dz.
	\end{displaymath}
	Отсюда,
	аналогичным рассуждением как в предыдущем случае
	получаем,
	что
	\begin{displaymath}
		I_2<\infty.
	\end{displaymath}
	
	Таким образом, при $p=1$ доказано, что
	\begin{displaymath}
		I=I_1+I_2<\infty.
	\end{displaymath}
	Значит,
	$\Ha{}{f,x}\in L_{1,\alpha}$.
	
	Пусть $1<p<\infty$.
	Имеем
	\begin{displaymath}
		|\Ha{}{f,x}|
		\le\int_0^x\Si{y}^{-3}
			\int_y^1\Si{z}^2\left|f(z)-\frac{c_1}{c_0}\right|\,dz\,dy.
	\end{displaymath}
	
	Рассмотрим
	\begin{displaymath}
		I_3=\int_0^1\Si{x}^{p\alpha}|\Ha{}{f,x}|^p\,dx.
	\end{displaymath}
	Пусть $0\le x\le1$.
	Выберем число~$\gamma$ такое, что
	\begin{displaymath}
		\max\left\{\alpha-3+\frac1p,-2-\frac1p\right\}
		<\gamma
		<\min\{0,\alpha-2\}.
	\end{displaymath}
	Применяя к внешнему интегралу неравенство Гельдера,
	учитывая, что $\gamma>-2-\frac1p$,
	получаем
	\begin{multline*}
		|\Ha{}{f,x}|^p
		\le\Cn\int_0^x(1-y)^{p\gamma}
			\left\{\int_y^1(1-z)^2
				\left|f(z)-\frac{c_1}{c_0}\right|\,dz
				\right\}^p\,dy\\
			\times\left\{
				\int_0^x(1-y)^{(-3-\gamma)\frac p{p-1}}\,dy
			\right\}^{p-1}
		\le\Cn(1-x)^{p(-2-\gamma)-1}\\
			\times\int_0^x(1-y)^{p\gamma}\left\{\int_y^1(1-z)^2
				\left|f(z)-\frac{c_1}{c_0}\right|\,dz\right\}^p\,dy.
	\end{multline*}
	Применяя теперь неравенство Гельдера к внутреннему интегралу,
	учитывая, что $\gamma>\alpha-3+\frac1p$,
	находим, что
	\begin{multline*}
		|\Ha{}{f,x}|^p
		\le\lastC(1-x)^{p(-2-\gamma)-1}\int_0^x(1-y)^{p\gamma}\\
			\times\int_y^1(1-z)^{p(\alpha-\gamma)}
				\left|f(z)-\frac{c_1}{c_0}\right|^p\,dz
				\left\{\int_y^1(1-z)^{(2-\alpha+\gamma)
					\frac p{p-1}}\,dz
				\right\}^{p-1}\,dy\\
		\le\Cn(1-x)^{p(-2-\gamma)-1}
			\int_0^x(1-y)^{p\gamma}\int_y^1(1-z)^{p(\alpha-\gamma)}
				\left|f(z)-\frac{c_1}{c_0}\right|^p\,dz\,dy.
	\end{multline*}
	Отсюда получаем, что
	\begin{multline*}
		I_3
		\le\lastC\int_0^1(1-x)^{p(\alpha-2-\gamma)-1}
		  	\int_0^x(1-y)^{p\gamma}\\
		  \times\int_y^1(1-z)^{p(\alpha-\gamma)}
			\left|f(z)-\frac{c_1}{c_0}\right|^p\,dz\,dy\,dx.
	\end{multline*}
	Поменяв пределы интегрирования,
	учитывая,
	что $\gamma<\alpha-2$ и $\gamma<0$,
	имеем
	\begin{multline*}
		I_3
		\le\lastC\int_0^1(1-z)^{p(\alpha-\gamma)}
				\left|f(z)-\frac{c_1}{c_0}\right|^p
				\int_0^z(1-y)^{p\gamma}\\
			\times\int_y^1(1-x)^{p(\alpha-2-\gamma)-1}\,dx\,dy\,dz\\
		\le\Cn\int_0^1(1-z)^{p\alpha}z
			\left|f(z)-\frac{c_1}{c_0}\right|^p\,dz
		\le\lastC\norm{f-\frac{c_1}{c_0}}.
	\end{multline*}
	Учитывая, что $f\in\Lp$ и $\alpha>-\frac1p$
	имеем
	\begin{displaymath}
		I_3<\infty.
	\end{displaymath}
	
	Обозначим
	\begin{displaymath}
		I_4=\int_{-1}^0\Si{x}^{p\alpha}|\Ha{}{f,x}|^p\,dx.
	\end{displaymath}
	Учитывая равенство~\eqref{eq:y-1-fz} имеем
	\begin{displaymath}
		\Ha{}{f,x}=\int_0^x\Si{y}^{-3}
		\int_{-1}^y\Si{z}^2\left(f(z)-\frac{c_1}{c_0}\right)\,dz\,dy.
	\end{displaymath}
	Отсюда, при $-1\le x\le0$ имеем
	\begin{displaymath}
		|\Ha{}{f,x}|^p\le\Cn\int_x^0(1+y)^{-3}
		\int_{-1}^y(1+z)^2\left|f(z)-\frac{c_1}{c_0}\right|\,dz\,dy.
	\end{displaymath}
	
	Рассуждая как и при оценке~$I_3$,
	а именно применяя дважды неравенство Гельдера,
	потом меняя пределы интегрирования,
	получим, что
	\begin{displaymath}
		I_4<\infty.
	\end{displaymath}
	
	Теперь
	\begin{displaymath}
		I=\norm{\Ha{}{f,x}}^p=I_3+I_4<\infty.
	\end{displaymath}
	
	Таким образом доказано,
	что при $1\le p<\infty$
	$\Ha{}{f,x}\in\Lp$.
	
	Пусть $p=\infty$.
	Обозначим
	\begin{displaymath}
		J=\max_{-1\le x\le1}\Si{x}^\alpha|\Ha{}{f,x}|.
	\end{displaymath}
	
	Пусть
	\begin{displaymath}
		J_1=\max_{0\le x\le1}\Si{x}^\alpha|\Ha{}{f,x}|.
	\end{displaymath}
	Тогда
	\begin{multline*}
		J_1
		\le\max_{0\le x\le1}\Si{x}^\alpha\int_0^x\Si{y}^{-3}
			\int_y^1\Si{z}^2
				\left|f(z)-\frac{c_1}{c_0}\right|\,dz\,dy\\
		\le\left\|f-\frac{c_1}{c_0}\right\|_{\infty,\alpha}
			\max_{0\le x\le1}\Si{x}^\alpha\int_0^x\Si{y}^{-3}
			\int_y^1\Si{z}^{2-\alpha}\,dz\,dy.
	\end{multline*}
	
	Учитывая, что $f\in L_{\infty,\alpha}$,
	имеем
	\begin{displaymath}
		J_1\le\Cn\max_{0\le x\le1}(1-x)^\alpha\int_0^x(1-y)^{-3}
		\int_y^1(1-z)^{2-\alpha}\,dz\,dy.
	\end{displaymath}
	Отсюда, при $0\le\alpha<3$,
	находим
	\begin{displaymath}
		J_1\le\Cn\max_{0\le x\le1}(1-x)^\alpha
		\int_0^x(1-y)^{-\alpha}\,dy<\infty.
	\end{displaymath}
	
	Пусть
	\begin{displaymath}
		J_2=\max_{-1\le x\le0}\Si{x}^\alpha|\Ha{}{f,x}|.
	\end{displaymath}
	Тогда по аналогии с оценкой для~$J_1$,
	учитывая равенство~\eqref{eq:y-1-fz},
	имеем
	\begin{displaymath}
		J_2
		\le\left\|f-\frac{c_1}{c_0}\right\|_{\infty,\alpha}
			\max_{-1\le x\le0}(1+x)^\alpha\int_x^0(1+y)^{-3}
			\int_{-1}^y(1+z)^{2-\alpha}\,dz\,dy<\infty.
	\end{displaymath}
	
	Таким образом, для $p=\infty$ вытекает,
	что
	\begin{displaymath}
		J=\max\left\{J_1,J_2\right\}<\infty,
	\end{displaymath}
	т.е. $\Ha{}{f,x}\in L_{\infty,\alpha}$.
	
	Лемма~\ref{lm:H-Lp} полностью доказана.
\end{proof}

\begin{lmm}\label{lm:dH-Lp}
	Пусть даны числа~$p$ и~$\alpha$ такие,
	что $\allp$;
	\begin{alignat*}2
		-\frac1p &<\alpha<3-\frac1p
			&\quad &\text{при $1\le p<\infty$},\\
		0        &\le\alpha<3
			&\quad &\text{при $p=\infty$}.
	\end{alignat*}
	Тогда если $f\in\Lp$,
	то $\frac d{dx}\Ha{}{f,x}\in\Lp$.
\end{lmm}

\begin{proof}
	Из определения $\Ha{}{f,x}$ имеем
	\begin{displaymath}
		\frac d{dx}\Ha{}{f,x}
		=-\Si{x}^{-3}\int_x^1\Si{z}^2
			\left(f(z)-\frac{c_1}{c_0}\right)\,dz.
	\end{displaymath}
	
	Пусть $1\le p<\infty$
	и
	\begin{multline*}
		I=\norm{\frac d{dx}\Ha{}{f,x}}^p\\
		=\int_{-1}^1\Si{x}^{\alpha-3}\left|\int_x^1\Si{z}^2
			\left(f(z)-\frac{c_1}{c_0}\right)\,dz\right|^p\,dx.
	\end{multline*}
	Сначала рассмотрим случай $p=1$.
	Рассмотрим
	\begin{multline*}
		I_1=\int_0^1\Si{x}^{\alpha-3}\left|\int_x^1\Si{z}^2
			\left(f(z)-\frac{c_1}{c_0}\right)\,dz\right|\,dx\\
		\le\Cn\int_0^1(1-x)^{\alpha-3}
			\int_x^1(1-z)^2\left|f(z)-\frac{c_1}{c_0}\right|\,dz\,dx.
	\end{multline*}
	Меняя пределы интегрирования,
	при $-1<\alpha<2$
	и $f\in L_{1,\alpha}$,
	имеем
	\begin{multline*}
		I_1
		\le\lastC\int_0^1(1-z)^2
			\left|f(z)-\frac{c_1}{c_0}\right|
			\int_0^z(1-x)^{\alpha-3}\,dx\,dz\\
		\le\Cn\int_0^1(1-z)^\alpha
			\left|f(z)-\frac{c_1}{c_0}\right|\,dz
		\le\lastC
			\left\|
				f(z)-\frac{c_1}{c_0}
			\right\|_{1,\alpha}<\infty.
	\end{multline*}
	
	Пусть
	\begin{displaymath}
		I_2
		=\int_{-1}^0\Si{x}^{\alpha-3}\left|\int_x^1\Si{z}^2
			\left(f(z)-\frac{c_1}{c_0}\right)\,dz\right|\,dx.
	\end{displaymath}
	Аналогично как и при оценке~$I_1$,
	учитывая равенство~\eqref{eq:y-1-fz},
	получим
	\begin{displaymath}
		I_2<\infty.
	\end{displaymath}
	
	Из того, что $I_1<\infty$ и $I_2<\infty$,
	следует, что
	\begin{displaymath}
		I=I_1+I_2<\infty,
	\end{displaymath}
	т.е. $\frac d{dx}\Ha{}{f,x}\in L_{1,\alpha}$.
	
	Пусть $1<p<\infty$.
	Рассмотрим
	\begin{multline*}
		I_3=\int_0^1\Si{x}^{p(\alpha-3)}\left|\int_x^1\Si{z}^2
			\left(f(z)-\frac{c_1}{c_0}\right)\,dz\right|^p\,dx\\
		\le\Cn\int_0^1(1-x)^{p(\alpha-3)}\left\{\int_x^1(1-z)^2
			\left|f(z)-\frac{c_1}{c_0}\right|\,dz\right\}^p\,dx.
	\end{multline*}
	Пусть $\alpha<\gamma<3-\frac1p$.
	Применяя неравенство Гельдера,
	потом меняя пределы интегрирования,
	получаем, что
	\begin{multline*}
		I_3\le\lastC\int_0^1(1-x)^{p(\alpha-3)}
			\int_x^1(1-z)^{p\gamma}
				  \left|f(z)-\frac{c_1}{c_0}\right|^p\,dz\\
			\times\left\{
				\int_x^1(1-z)^{(2-\gamma)\frac p{p-1}}\,dz
			\right\}^{p-1}\,dx\\
		=\Cn\int_0^1(1-x)^{p(\alpha-\gamma)-1}
			\int_x^1(1-z)^{p\gamma}
				\left|f(z)-\frac{c_1}{c_0}\right|^p\,dz\,dx\\
		=\lastC\int_0^1(1-z)^{p\gamma}
			\left|f(z)-\frac{c_1}{c_0}\right|^p
				\int_0^z(1-x)^{p(\alpha-\gamma)-1}\,dx\,dz\\
		=\Cn\int_0^1(1-z)^{p\alpha}
			\left|f(z)-\frac{c_1}{c_0}\right|^p\,dz
		\le\lastC\norm{f-\frac{c_1}{c_0}}.
	\end{multline*}
	Отсюда, учитывая,
	что $f\in\Lp$ и $\alpha>-\frac1p$,
	имеем
	\begin{displaymath}
		I_3<\infty.
	\end{displaymath}
	
	Рассмотрим
	\begin{displaymath}
		I_4
		=\int_{-1}^0\Si{x}^{p(\alpha-3)}\left|\int_x^1\Si{z}^2
			\left(f(z)-\frac{c_1}{c_0}\right)\,dz\right|^p\,dx.
	\end{displaymath}
	
	Аналогично как и при оценке~$I_3$,
	учитывая равенство~\eqref{eq:y-1-fz},
	получим
	\begin{displaymath}
		I_4<\infty.
	\end{displaymath}
	
	Теперь
	\begin{displaymath}
		I=I_3+I_4<\infty.
	\end{displaymath}
	
	Таким образом доказано,
	что при $1\le p<\infty$
	$\frac d{dx}\Ha{}{f,x}\in\Lp$.
	
	Пусть теперь $p=\infty$.
	Рассмотрим
	\begin{multline*}
		J=\max_{-1\le x\le1}\Si{x}^\alpha
		  \left|\frac d{dx}\Ha{}{f,x}\right|\\
		=\max_{-1\le x\le1}\Si{x}^{\alpha-3}
		  \left|
			\int_x^1\Si{z}^2
			  \left(f(z)-\frac{c_1}{c_0}\right)\,dz
			\right|.
	\end{multline*}
	
	При $\alpha<3$ имеем
	\begin{multline*}
		J_1=\max_{0\le x\le1}\Si{x}^{\alpha-3}
		  \left|
			\int_x^1\Si{z}^2
				  \left(f(z)-\frac{c_1}{c_0}\right)\,dz
		  \right|\\
		\le\left\|f-\frac{c_1}{c_0}\right\|_{\infty,\alpha}
			\max_{0\le x\le1}(1-x)^{\alpha-3}
			\int_x^1(1-z)^{2-\alpha}\,dz
		=\Cn\left\|f-\frac{c_1}{c_0}\right\|_{\infty,\alpha}.
	\end{multline*}
	Отсюда, при $f\in L_{\infty,\alpha}$ и $\alpha\ge0$
	имеем
	\begin{displaymath}
		J_1<\infty.
	\end{displaymath}
	
	Аналогично, с использованием равенства~\eqref{eq:y-1-fz},
	при $f\in L_{\infty,\alpha}$ и $0\le\alpha<3$
	находим,
	что
	\begin{displaymath}
		J_2=\max_{-1\le x\le0}\Si{x}^{\alpha-3}
			\left|
			  \int_x^1\Si{z}^2
				\left(f(z)-\frac{c_1}{c_0}\right)\,dz
			\right|
		<\infty.
	\end{displaymath}
	
	Таким образом
	\begin{displaymath}
		J=\max\{J_1,J_2\}<\infty,
	\end{displaymath}
	значит $\frac d{dx}\Ha{}{f,x}\in L_{\infty,\alpha}$.
	
	Лемма~\ref{lm:dH-Lp} доказана.
\end{proof}

\begin{lmm}\label{lm:DxH}
	Пусть $f\in\Lmu$.
	Справедливые следующие равенства
	\begin{displaymath}
		\Dx^l\Ha r{f,x}=\Ha{r-l}{f,x}-\frac{c_{r-l+1}}{c_0}
		\quad (l=1,\dots,r-1)
	\end{displaymath}
	и
	\begin{equation}\label{eq:DxHa}
		\Dx^r\Ha r{f,x}=f(x)-\frac{c_1}{c_0},
	\end{equation}
	где $c_{r-l+1}=\int_{-1}^1\Si{z}^2\Ha{r-l}{f,z}\,dz$.
\end{lmm}

\begin{proof}
	Докажем сначала равенство~\eqref{eq:DxHa}.
	Для $r=1$ имеем
	\begin{displaymath}
		\Dx\Ha{}{f,x}=f(x)-\frac{c_1}{c_0}.
	\end{displaymath}
	
	Теперь, учитывая,
	что по утверждении леммы~\ref{lm:H-Lp}
	следует $\Ha r{f,x}\in\Lmu$,
	равенство~\eqref{eq:DxHa}
	доказывается по индукции.
	
	Из доказанного равенства~\eqref{eq:DxHa}
	следует,
	что для $l=1,\dots,r-1$
	имеем
	\begin{displaymath}
		\Dx^l\Ha r{f,x}=\Dx^l\Ha l{\Ha{r-l}{f,x},x}
		=\Ha{r-l}{f,x}-\frac{c_{r-l+1}}{c_0}.
	\end{displaymath}
	
	Лемма~\ref{lm:DxH} доказана.
\end{proof}

\begin{lmm}\label{lm:H-AD}
	Пусть даны числа~$p$, $\alpha$ и~$r$
	такие,
	что $\allp$,
	$r\in\numN$;
	\begin{alignat*}2
		-\frac1p &<\alpha<3-\frac1p &\quad &\text{при $1\le p<\infty$},\\
		0        &\le\alpha<3       &\quad &\text{при $p=\infty$}.
	\end{alignat*}
	Тогда если $f\in\Lp$,
	то $\Ha r{f,x}\in\AD$.
\end{lmm}

\begin{proof}
	По лемме~\ref{lm:H-Lp} имеем,
	что $\Ha r{f,x}\in\Lp$.
	Из условий леммы следует,
	что $f\in\Lmu$ и $\Ha r{f,x}\in\Lmu$.
	Значит, постоянные~$c_r$ $(r=1,2,\dotsc)$
	в определении $\Ha r{f,x}$
	определены.
	
	Рассмотрим сначала случай $r=1$.
	По определению оператора $\Ha{}{f,x}$ ясно,
	что $\frac d{dx}\Ha{}{f,x}$
	--- абсолютно непрерывная  функция
	на каждом отрезке $[a,b]\subset(-1,1)$.
	Далее,
	из леммы~\ref{lm:DxH} вытекает,
	что
	\begin{displaymath}
		\Dx\Ha{}{f,x}=f(x)-\frac{c_1}{c_0},
	\end{displaymath}
	и следовательно $\Dx\Ha{}{f,x}\in\Lp$.
	Из лемм~\ref{lm:H-Lp} и~\ref{lm:dH-Lp}
	следует,
	что $\Ha{}{f,x}\in\Lp$.
	Таким образом, $\Ha{}{f,x}\in AD^1(p,\alpha)$.
	
	Теперь,
	применяя формулу Лейбница и Лемму~\ref{lm:DxH},
	утверждение леммы доказывается на основании индукции.
\end{proof}

\begin{lmm}\label{lm:Hd}
	Пусть $f\in\Lmu$.
	Справедливы равенства
	\begin{equation}\label{eq:Hd}
		\Hd r{f,x}
		=\frac1{\kap^r}\Dl\delta r{\Ha r{f,x},x}+\frac{c_1}{c_0}
		\quad (r=1,2,\dotsc),
	\end{equation}
	где
	\begin{displaymath}
		\kap=\int_0^\delta\sincosv\int_0^v\sincosu\,du\,dv.
	\end{displaymath}
\end{lmm}

\begin{proof}
	Докажем сначала равенство~\eqref{eq:Hd}
	для $r=1$.
	По лемме~\ref{lm:DxH} имеем
	\begin{displaymath}
		f(x)=\Dx\Ha{}{f,x}+\frac{c_1}{c_0}.
	\end{displaymath}
	Поэтому
	\begin{multline*}
		\Hd{}{f,x}=\frac1\kap\int_0^\delta\sincosv\\
		\times\int_0^v\hatT u{}{\Dx\Ha{}{f,x}
			+\frac{c_1}{c_0},x}\sincosu\,du\,dv.
	\end{multline*}
	Поскольку из свойств оператора $\hatT u{}{f,x}$,
	отмеченных в лемме~\ref{lm:properties-tau},
	следует, что
	\begin{displaymath}
		\hatT u{}{\Dx\Ha{}{f,x}+\frac{c_1}{c_0},x}
		=\hatT u{}{\Dx\Ha{}{f,x},x}+\frac{c_1}{c_0},
	\end{displaymath}
	то
	\begin{multline*}
		\Hd{}{f,x}=\frac1\kap\int_0^\delta\sincosv\\
		\times\int_0^v\hatT u{}{\Dx\Ha{}{f,x},x}\sincosu\,du\,dv
			+\frac{c_1}{c_0}.
	\end{multline*}
	Применяя лемму~\ref{lm:tauuDx},
	учитывая, что по лемме~\ref{lm:H-AD}
	функция $\Ha{}{f,x}$ имеет абсолютно непрерывную
	на каждом отрезке $[a,b]\subset(-1,1)$
	производную $\frac d{dx}\Ha{}{f,x}$,
	получаем
	\begin{displaymath}
		\Hd{}{f,x}
		=\frac1\kap\Dl\delta{}{\Ha{}{f,x},x}+\frac{c_1}{c_0}.
	\end{displaymath}
	
	Теперь для любого натурального~$r$
	справедливость равенства~\eqref{eq:Hd}
	доказывается индукции,
	применяя леммы~\ref{lm:DxH}, \ref{lm:cr:Dtau}
	и~\ref{lm:tauuDx}.
\end{proof}

\begin{cor}
	Пусть $f\in\Lmu$,
	тогда справедливы равенства
	\begin{displaymath}
		\Dx^r\Hd r{f,x}=\frac1{\kap^r}\Dl\delta r{f,x}
		\quad (r=1,2,\dotsc).
	\end{displaymath}
\end{cor}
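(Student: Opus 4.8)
The plan is to read the identity off from the representation~\eqref{eq:Hd} of Lemma~\ref{lm:Hd} by applying $\Dx^r$ to both of its sides. Since $\Dx$ has no zero-order term it annihilates the additive constant $c_1/c_0$ appearing in~\eqref{eq:Hd}, so $\Dx^r(c_1/c_0)=0$ and the whole matter reduces to checking $\Dx^r\Dl\delta r{\Ha r{f,x},x}=\Dl\delta r{f,x}$; everything else is bookkeeping.

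The first step towards this is to move $\Dx^r$ past the $r$-th iterated difference. Expanding $\Dl\delta r{\,\cdot\,,x}=(\hat\tau_\delta-I)^r$ by the binomial theorem exhibits $\Dl\delta r{g,x}$ as a $\pm1$-combination of the iterated translations $\hat\tau_\delta^k g$, $k=0,\dots,r$, and after the substitution $y=\cos\delta$ each $\hat\tau_\delta^k g$ is the $k$-fold generalised translation $\T{\arr y k}k{g,x}$ with all arguments equal to $\cos\delta$. By Lemma~\ref{lm:H-AD} the function $g(x)=\Ha r{f,x}$ belongs to $\AD$, hence is $2r-1$ times continuously differentiable on every segment $[a,b]\subset(-1,1)$; this is exactly the hypothesis of Lemma~\ref{lm:cr:Dtau} (read with $k$ translations and $r$ copies of $\Dx$), which therefore gives $\Dx^r\hat\tau_\delta^k g=\hat\tau_\delta^k\Dx^r g$ for each $k$. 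Summing over $k$ with the binomial coefficients and using the linearity of $\hat\tau_\delta$ (Lemma~\ref{lm:properties-tau}) yields $\Dx^r\Dl\delta r{\Ha r{f,x},x}=\Dl\delta r{\Dx^r\Ha r{f,x},x}$.

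It remains to identify the right-hand side. Formula~\eqref{eq:DxHa} of Lemma~\ref{lm:DxH} gives $\Dx^r\Ha r{f,x}=f(x)-c_1/c_0$, and since $\hat\tau_\delta 1=1$ the operator $\Dl\delta r{\,\cdot\,,x}$ kills constants, so $\Dl\delta r{f-c_1/c_0,x}=\Dl\delta r{f,x}$; assembled with the preceding steps this gives $\Dx^r\Hd r{f,x}=\kap^{-r}\Dl\delta r{f,x}$. The steps with constants, linearity and the binomial expansion are routine; the one point needing genuine care, and which I expect to be the main obstacle, is the commutation $\Dx^r\Dl\delta r{g,x}=\Dl\delta r{\Dx^r g,x}$, which is legitimate only because, by Lemma~\ref{lm:H-AD}, $g=\Ha r{f,x}\in\AD$, a property supplying both the $2r-1$ continuous derivatives and the memberships $\Dx^l g\in\Lp$ $(l=0,\dots,r)$ on which Lemma~\ref{lm:cr:Dtau} and~\eqref{eq:DxHa} rest.
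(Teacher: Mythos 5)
Your proposal is correct and follows essentially the same route as the paper: differentiate the representation of Lemma~\ref{lm:Hd}, commute $\Dx^r$ with $\Dl\delta r{\cdot,x}$ via Lemma~\ref{lm:cr:Dtau} (legitimate because $\Ha r{f,x}\in\AD$ by Lemma~\ref{lm:H-AD}), and finish with formula~\eqref{eq:DxHa} of Lemma~\ref{lm:DxH} together with the fact that the iterated difference annihilates constants. The only difference is presentational: you spell out the binomial expansion of $\Dl\delta r{\cdot,x}$ before invoking Lemma~\ref{lm:cr:Dtau}, which the paper leaves implicit.
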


\begin{proof}
	По лемме~\ref{lm:Hd} имеем
	\begin{displaymath}
		\Hd r{f,x}
		=\frac1{\kap^r}\Dl\delta r{\Ha r{f,x},x}
			+\frac{c_1}{c_0}.
	\end{displaymath}
	Так как, из леммы~\ref{lm:H-AD} вытекает $\Ha r{f,x}\in\AD$,
	то по лемме~\ref{lm:cr:Dtau}
	получаем, что
	\begin{multline*}
		\Dx^r\Hd r{f,x}
		  =\frac1{\kap^r}\Dx^r\Dl\delta r{\Ha r{f,x},x}\\
		=\frac1{\kap^r}\Dl\delta r{\Dx^r\Ha r{f,x},x}.
	\end{multline*}
	Применяя лемму~\ref{lm:DxH},
	находим
	\begin{displaymath}
		\Dx^r\Hd r{f,x}
		=\frac1{\kap^r}\Delta_\delta^r
		  \Bigl(f-\frac{c_1}{c_0},x\Bigr)
		=\frac1{\kap^r}\Dl\delta r{f,x}.
	\end{displaymath}
	
	Следствие доказано.
\end{proof}

\begin{lmm}\label{lm:Hd-AD}
	Пусть даны числа~$p$, $\alpha$, $r$ и~$\delta$
	такие,
	что $\allp$, $r\in\numN$, $0\le\delta<\pi$;
	\begin{alignat*}2
		\frac12      &<\alpha\le1
		  &\quad &\text{при $p=1$},\\
		1-\frac1{2p} &<\alpha<\frac32-\frac1{2p}
		  &\quad &\text{при $1<p<\infty$},\\
		1            &\le\alpha<\frac32
		  &\quad &\text{при $p=\infty$}.
	\end{alignat*}
	Если $f\in\Lp$,
	то $\Hd r{f,x}\in\AD$.
\end{lmm}

\begin{proof}
	Так как, в условиях леммы имеем $f\in\Lmu$,
	то по лемме~\ref{lm:Hd}
	\begin{displaymath}
		\Hd r{f,x}
		=\frac1{\kap^r}\Dl\delta r{\Ha r{f,x},x}
		  +\frac{c_1}{c_0}.
	\end{displaymath}
	По лемме~\ref{lm:H-AD}
	$\Ha r{f,x}\in\AD$.
	Из леммы~\ref{lm:Dtau} следует,
	что $\Hd r{f,x}$ имеет абсолютно непрерывную
	на каждом отрезке $[a,b]\subset(-1,1)$
	производную
	$\frac{d^{2r-1}}{dx^{2r-1}}\Hd r{f,x}$.
	Применяя теорему Лебега о предельном переходе под знаком интеграла,
	имеем, что для $l=1,\dots,r$
	\begin{displaymath}
		\Dx^l\Hd r{f,x}
		=\frac1{\kap^r}\Dl\delta r{\Dx^l\Ha r{f,x},x},
	\end{displaymath}
	т.е., опять по леммам~\ref{lm:Hd} и~\ref{lm:Dtau},
	$\Dx^l\Hd r{f,x}$ --- абсолютно непрерывная функция
	на каждом отрезке $[a,b]\subset(-1,1)$.
	
	Из леммы~\ref{lm:DxH}
	для $l=1,\dots,r$
	имеем
	\begin{displaymath}
		\Dx^l\Hd r{f,x}
		=\frac1{\kap^r}\Dl\delta r{\Ha{r-l}{f,x},x}.
	\end{displaymath}
	Теперь,
	применяя лемму~\ref{lm:bound-tau}
	при фиксированном~$\delta$,
	учитывая, что по лемме~\ref{lm:H-Lp}
	$\Ha{r-l}{f,x}\in\Lp$,
	имеем что
	$\Dx^l\Hd r{f,x}\in\Lp$.
	
	Следовательно, $\Hd r{f,x}\in\AD$.
	Тем самым лемма~\ref{lm:Hd-AD} доказана.
\end{proof}

\begin{lmm}\label{lm:E-D}
	Пусть даны числа~$p$, $\alpha$ и~$r$ такие,
	что $\allp$,
	$r\in\numN$;
	\begin{alignat*}2
		-\frac12    &<\alpha\le2
		  &\quad &\text{при $p=1$},\\
		-\frac1{2p} &<\alpha<\frac52-\frac1{2p}
		  &\quad &\text{при $1<p<\infty$},\\
		0           &\le\alpha<\frac52
		  &\quad &\text{при $p=\infty$}.
	\end{alignat*}
	Пусть $f\in\AD$.
	Тогда справедливо неравенство
	\begin{displaymath}
		\E\le C\frac1{n^{2r}}\norm{\Dx^r f(x)},
	\end{displaymath}
	где постоянная~$C$ не зависит от~$f$ и~$n$.
\end{lmm}

\begin{proof}
	Для $r=1$ лемма доказана
	в работе~\cite{p-berisha:anal-99}.
	
	Пусть~$P_n(x)$ --- алгебраический многочлен
	наилучшего приближения функции $\Dx f(x)$,
	степени не выше, чем $n-1$.
	Ясно, что многочлен~$P_n(x)$ можно представить в виде
	\begin{displaymath}
		P_n(x)=\sum_{k=0}^{n-1}\lambda_k\Px k(x).
	\end{displaymath}
	Пусть
	\begin{displaymath}
		g(x)=f(x)
		  +\sum_{k=0}^{n-1}\frac{\lambda_k}{k(k+5)}\Px k(x).
	\end{displaymath}
	Тогда по уже доказанному для $r=1$ случаю леммы
	имеем~\cite[с.171]{erdelyi-m-o-t:transcendental}
	\begin{multline*}
		\Epar n g\le\Cn\frac1{n^2}\norm{\Dx g(x)}\\
		=\lastC\frac1{n^2}
			\biggl\|
			  \Dx f(x)-\sum_{k=0}^{n-1}\lambda_k\Px k(x)
			\biggr\|\\
			=\lastC\frac1{n^2}\Epar n{\Dx f}.
	\end{multline*}
	Отсюда,
	учитывая, что $f(x)-g(x)$ --- алгебраический многочлен
	степени не выше, чем $n-1$,
	получаем
	\begin{displaymath}
		\E\le\Epar n{f-g}+\Epar n g
		\le\lastC\frac1{n^2}\Epar n{\Dx f}.
	\end{displaymath}
	
	Теперь, применяя это неравенство $r$~раз,
	получим, что
	\begin{displaymath}
		\E\le\Cn\frac1{n^{2r}}\Epar n{\Dx^r f}
		\le\lastC\frac1{n^{2r}}\norm{\Dx^r f(x)}.
	\end{displaymath}
	
	Лемма~\ref{lm:E-D} доказана.
\end{proof}

\section{Основные утверждения}

\begin{thm}\label{th:w-K}
	Пусть даны числа~$p$, $\alpha$ и~$r$ такие,
	что $\allp$,
	$r\in\numN$;
	\begin{alignat*}2
		\frac12      &<\alpha\le1
		  &\quad &\text{при $p=1$},\\
		1-\frac1{2p} &<\alpha<\frac32-\frac1{2p}
		  &\quad &\text{при $1<p<\infty$},\\
		1            &\le\alpha<\frac32
		  &\quad &\text{при $p=\infty$}.
	\end{alignat*}
	Пусть $f\in\Lp$.
	Тогда при всех $\delta\in[0,\pi)$
	имеют место неравенства
	\begin{displaymath}
		\Cn\left(\Co\delta\right)^{r(r-1)}\K\le\w
		\le\Cn\frac1{\left(\Co\delta\right)^r}\K,
	\end{displaymath}
	где положительные постоянные~$\prevC$ и~$\lastC$
	не зависят от~$f$ и~$\delta$.
\end{thm}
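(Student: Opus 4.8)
We prove the two inequalities separately; $C$ denotes a positive constant independent of $f$ and $\delta$, not necessarily the same at each occurrence. We regard $H_\delta$ as a bounded operator on $\Lp$: Minkowski's integral inequality together with Lemma~\ref{lm:bound-tau} (and $\Co u\ge\Co\delta$ for $0\le u\le\delta$) gives $\norm{\Hd{}{f,x}}\le C(\Co\delta)^{-1}\norm f$, hence $\norm{\Hd k{f,x}}\le C^k(\Co\delta)^{-k}\norm f$. We also use that the generalized translations $\hat\tau_t$ $(0\le t<\pi)$ commute pairwise, which follows from item~\ref{it:properties-tau-6} of Lemma~\ref{lm:properties-tau} and completeness of $\{\Px n\}$ in $\Lmu$; consequently $H_\delta$, $\hat\tau_\delta$ and all their powers commute. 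For the \emph{upper estimate}, fix $g\in\AD$ and split $\w\le\wpar{f-g,\delta}+\wpar{g,\delta}$. Since $\Dl{\arr t r}r{h,x}=(\hat\tau_{t_r}-I)\cdots(\hat\tau_{t_1}-I)h$ and $\norm{\hat\tau_{t_i}h}\le C(\Co\delta)^{-1}\norm h$ for $|t_i|\le\delta$, the first summand is $\le C(\Co\delta)^{-r}\norm{f-g}$, while the second is $\le C(\Co\delta)^{-r}\delta^{2r}\norm{\Dx^r g(x)}$ by Lemma~\ref{lm:w-Dx}; adding and taking the infimum over $g\in\AD$ yields $\w\le C(\Co\delta)^{-r}\K$.

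For the \emph{lower estimate} the plan is to test the $K$-functional on the single function
\[
	\gd=f(x)-(I-H_\delta^r)^rf(x)=\sum_{k=1}^r(-1)^{k+1}\binom rk\Hd{rk}{f,x},
\]
$H_\delta^r$ being the $r$-fold iterate of $H_\delta$. The naive choice $g=\Hd r{f,x}$ does not work, since $f-\Hd r{f,x}$ only involves a \emph{first}-order difference of $f$, and $\w$ does not dominate that when $r\ge2$; applying $(I-H_\delta^r)^r$ instead forces a genuine $r$-th order difference to appear. By Lemma~\ref{lm:Hd-AD} each $\Hd{rk}{f,x}$ lies in $AD^{rk}(p,\alpha)\subseteq\AD$, so $\gd\in\AD$ and $\K\le\norm{f-\gd}+\delta^{2r}\norm{\Dx^r\gd}$.

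To bound $\norm{f-\gd}$: from $\kap^{-1}\int_0^\delta\sincosv\int_0^v\sincosu\,du\,dv=1$ one gets $(I-H_\delta)f=-\kap^{-1}\int_0^\delta\sincosv\int_0^v\Dl u{}{f,x}\sincosu\,du\,dv$, and iterating $r$ times (legitimate since $H_\delta$ commutes with each $\hat\tau_u$) represents $(I-H_\delta)^rf$ as a $\kap^{-r}$-normalised average of $\Dl{\arr u r}r{f,x}$ over $u_i\in[0,\delta]$, so that $\norm{(I-H_\delta)^rf}\le\w$ by Minkowski. Using $I-H_\delta^r=(I+H_\delta+\dots+H_\delta^{r-1})(I-H_\delta)$ one writes $f-\gd=(I+H_\delta+\dots+H_\delta^{r-1})^r\bigl((I-H_\delta)^rf\bigr)$; since $(\Co\delta)^{-1}\ge1$ the polynomial-in-$H_\delta$ factor has operator norm $\le C(\Co\delta)^{-r(r-1)}$, whence $\norm{f-\gd}\le C(\Co\delta)^{-r(r-1)}\w$ --- this is precisely where the exponent $r(r-1)$ enters. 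To bound $\delta^{2r}\norm{\Dx^r\gd}$: with $Gf=\sum_{k=1}^r(-1)^{k+1}\binom rk\Hd{r(k-1)}{f,x}\in\Lp\subseteq\Lmu$ we have $\gd=\Hd r{Gf,x}$, so the Corollary following Lemma~\ref{lm:Hd} gives $\Dx^r\gd=\kap^{-r}\Dl\delta r{Gf,x}$; commuting each $H_\delta^{r(k-1)}$ past the operator $\Delta_\delta^r$ rewrites this as $\kap^{-r}$ times the result of applying the bounded operator $\sum_{k=1}^r(-1)^{k+1}\binom rk H_\delta^{r(k-1)}$ to $\Dl\delta r{f,x}$, and since that operator has norm $\le C(\Co\delta)^{-r(r-1)}$ and $\norm{\Dl\delta r{f,x}}\le\w$, we obtain $\norm{\Dx^r\gd}\le C\kap^{-r}(\Co\delta)^{-r(r-1)}\w$. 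Finally an elementary computation gives $\kap\ge c\,\delta^2$ for $0\le\delta<\pi$, so $\delta^{2r}/\kap^r\le C$ and $\delta^{2r}\norm{\Dx^r\gd}\le C(\Co\delta)^{-r(r-1)}\w$. Combining the two bounds, $\K\le C(\Co\delta)^{-r(r-1)}\w$, i.e.\ $C(\Co\delta)^{r(r-1)}\K\le\w$.

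The main work lies in the lower estimate: setting up the iterated integral representation of $(I-H_\delta)^rf$ correctly (this rests on the pairwise commutativity of the generalized translations together with Lemma~\ref{lm:Hd} and its Corollary) and verifying that the accumulated operator-norm factors collapse to exactly $(\Co\delta)^{-r(r-1)}$; the estimate $\kap\asymp\delta^2$ on $[0,\pi)$ is routine but must be recorded.
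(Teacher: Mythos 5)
Your proof is correct and follows essentially the same route as the paper's: the identical upper bound via Lemmas~\ref{lm:bound-tau} and~\ref{lm:w-Dx}, and the identical test function $\gd=\left(1-(1-H_\delta^r)^r\right)(f,x)$ for the lower bound, with the factor $\left(\Co\delta\right)^{-r(r-1)}$ arising from the same factorization $1-H_\delta^r=(1-H_\delta)(1+H_\delta+\dots+H_\delta^{r-1})$, the same integral representation of $1-H_\delta$ as an average of differences, and the same use of Lemma~\ref{lm:Hd} with its Corollary and of $\kap\ge C\delta^2$. The only differences are cosmetic (you apply the operator factorizations all at once rather than peeling one factor per step, and you fold the case $\frac\pi2\le\delta<\pi$ into the claim $\kap\ge C\delta^2$ on all of $(0,\pi)$ instead of treating it by comparison with $\delta=1$ as the paper does), so no substantive gap remains.
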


\begin{proof}
	Для любой функции $g(x)\in\AD$ имеем
	\begin{displaymath}
		\w\le\wpar{f-g,\delta}+\wpar{g,\delta}.
	\end{displaymath}
	Применяя лемму~\ref{lm:bound-tau},
	находим, что
	\begin{displaymath}
		\wpar{f-g,\delta}
		\le\Cn\frac1{\left(\Co\delta\right)^r}\norm{f-g}.
	\end{displaymath}
	Далее, в силу леммы~\ref{lm:w-Dx}
	\begin{displaymath}
		\wpar{g,\delta}\le\Cn\frac1{\left(\Co\delta\right)^r}
		\delta^{2r}\norm{\Dx^r g(x)}.
	\end{displaymath}
	
	Поэтому
	\begin{displaymath}
		\w\le\Cn\frac1{\left(\Co\delta\right)^r}
		\left(\norm{f-g}+\delta^{2r}\norm{\Dx^r g(x)}\right).
	\end{displaymath}
	Переходя в этом неравенстве к точной нижней грани
	по $g(x)\in\AD$,
	получаем правое неравенство теоремы.
	
	Для доказательства левого неравенства
	для данной функции $f\in\Lp$
	рассмотрим функцию
	\begin{displaymath}
		\gd=\left(1-(1-H_\delta^r)^r\right)(f,x),
	\end{displaymath}
	где $1(f,x)=f(x)$.
	
	Из леммы~\ref{lm:Hd-AD} следует, что
	$\Hd l{f,x}\in AD^l(p,\alpha)$ $(l\in\numN)$.
	Поскольку
	\begin{displaymath}
		1-(1-H_\delta^r)^r
		=\sum_{k=1}^r\binom rk(-1)^k H_\delta^{kr},
	\end{displaymath}
	то, учитывая,
	что $AD^{kr}(p,\alpha)\subseteq\AD$ $(k=1,\dots,r)$,
	получаем, что
	\begin{displaymath}
		\gd\in\AD.
	\end{displaymath}
	
	Оценим выражение
	\begin{displaymath}
		\norm{\Dx^r\gd}.
	\end{displaymath}
	Для этого, замечаем, что,
	поскольку
	$\Hd{kr-l}{f,x}$
	$(k=2,\dots,r;\;\allowbreak l=0,1,\allowbreak\dots,r-1)$
	имеет на каждом отрезке $[a,b]\subset(-1,1)$
	абсолютно непрерывную $2r-1$ производную,
	то применяя сначала теорему Лебега
	о предельном переходе под знаком интеграла,
	потом лемму~\ref{lm:cr:Dtau},
	обобщенное неравенство Минковского
	и, наконец, лемму~\ref{lm:bound-tau},
	получаем, что
	\begin{multline*}
		\norm{\Dx^r\Hd{kr}{f,x}}
		\le\frac1{\kap}\int_0^\delta\sincosv\\
			\times\int_0^v
				\norm{\hatT u{}{\Dx^r\Hd{kr-1}{f,x},x}}
				\sincosu\,du\,dv\\
		\le\Cn\frac1{\Co\delta}\norm{\Dx^r\Hd{kr-1}{f,x}}.
	\end{multline*}
	
	Применяя это неравенство $k-1$ раз, получим, что
	\begin{displaymath}
		\norm{\Dx^r\Hd{kr}{f,x}}
		\le\Cn\frac1{\left(\Co\delta\right)^{r(k-1)}}
		  \norm{\Dx^r\Hd r{f,x}}.
	\end{displaymath}
	
	Так как, $\gd$ представляет собой сумму членов
	содержащих $\Hd{kr}{f,x}$ $(k=1,\dots,r)$,
	то по последнему неравенству находим
	\begin{displaymath}
		\norm{\Dx^r\gd}
		\le\Cn\frac1{\left(\Co\delta\right)^{r(r-1)}}
		  \norm{\Dx^r\Hd r{f,x}}.
	\end{displaymath}
	Применяя следствие из леммы~\ref{lm:Hd},
	получаем
	\begin{displaymath}
		\norm{\Dx^r\gd}
		\le\lastC\frac1{\kap^r\left(\Co\delta\right)^{r(r-1)}}
		\norm{\Dl\delta r{f,x}}.
	\end{displaymath}
	
	Легко оценить, что при $0<\delta\le\frac\pi2$
	\begin{displaymath}
		\kap\ge\Cn\delta^2.
	\end{displaymath}
	Отсюда следует, что при $0<\delta\le\frac\pi2$
	\begin{equation}\label{eq:Dx-w}
		\delta^{2r}\norm{\Dx^r \gd}
		\le\Cn\frac1{\left(\Co\delta\right)^{r(r-1)}}\w.
	\end{equation}
	
	С другой стороны
	\begin{multline}\label{eq:f-gd-Hd}
		\norm{f(x)-\gd}
		=\norm{f(x)-\left(1-(1-H_\delta^r)^r\right)(f,x)}\\
		=\norm{(1-H_\delta^r)^r(f,x)}.
	\end{multline}
	Заметим, что
	\begin{equation}\label{eq:1-Hd}
		1-H_\delta^r
		=(1-H_\delta)(1+H_\delta+H_\delta^2+\dots+H_\delta^{r-1}).
	\end{equation}
	Теперь, применяя обобщенное неравенство Минковского
	и лемму~\ref{lm:bound-tau},
	для $l=1,\dots,r-1$, имеем
	\begin{multline*}
		\norm{\Hd l{f,x}}\le\frac1{\kap}\int_0^\delta\sincosv\\
		\times\int_0^v\norm{\hatT u{}{\Hd{l-1}{f,x},x}}
			\sincosu\,du\,dv\\
		\le\Cn\frac1{\Co\delta}\norm{\Hd{l-1}{f,x}}.
	\end{multline*}
	Применяя это неравенство $l$~раз, получаем, что
	\begin{displaymath}
		\norm{\Hd l{f,x}}
		\le\Cn\frac1{\left(\Co\delta\right)^l}\norm{f(x)}.
	\end{displaymath}
	Поэтому, из равенства~\eqref{eq:1-Hd},
	применяя обобщенное неравенство Минковского,
	имеем, что
	\begin{multline}\label{eq:1-Hd-Dl}
		\norm{(1-H_\delta^r)(f,x)}
		\le\Cn\frac1{\left(\Co\delta\right)^{r-1}}
			\norm{(1-H_\delta)(f,x)}\\
		\le\lastC\frac1{\kap\left(\Co\delta\right)^{r-1}}
			\int_0^\delta\sincosv\\
			\times\int_0^v\norm{\hatT u{}{f,x}-f(x)}\sincosu\,du\,dv\\
		\le\Cn\frac1{\left(\Co\delta\right)^{r-1}}
			\sup_{0\le u\le\delta}\norm{\Dl u{}{f,x}}.
	\end{multline}
	
	Применяя неравенство~\eqref{eq:1-Hd-Dl},
	из равенства~\eqref{eq:f-gd-Hd} получим,
	что
	\begin{multline}\label{eq:f-gd-Dl}
		\norm{f(x)-\gd}\\
		\le\lastC\frac1{\left(\Co\delta\right)^{r-1}}
			\sup_{0\le t_1\le\delta}
				\norm{\Dl{t_1}{}{(1-H_\delta^r)^{r-1}(f,x),x}}.
	\end{multline}
	
	Замечаем, что меняя пределы интегрирования получаем
	\begin{displaymath}
		\hatT t{}{\Hd{}{f,x},x}=\Hd{}{\hatT t{}{f,x},x}.
	\end{displaymath}
	Применяя это равенство $r$~раз получим
	\begin{displaymath}
		\hatT t{}{\Hd r{f,x},x}=\Hd r{\hatT t{}{f,x},x}.
	\end{displaymath}
	Отсюда очевидно, что
	\begin{displaymath}
		\Dl t{}{(1-H_\delta^r)(f,x),x}=(1-H_\delta^r)(\Dl t{}{f,x},x).
	\end{displaymath}
	
	Применяя сначала это равенство,
	затем неравенство~\eqref{eq:f-gd-Dl},
	потом неравенство~\eqref{eq:1-Hd-Dl},
	получим, что
	\begin{multline*}
		\norm{f(x)-\gd}\\
		\le\Cn\frac1{\left(\Co\delta\right)^{2(r-1)}}
			\sup_{0\le t_1\le\delta}\sup_{0\le t_2\le\delta}
				\norm{\Dl{t_1,t_2}2{(1-H_\delta^r)^{r-2}(f,x),x}}.
	\end{multline*}
	
	Теперь, применяя $r-1$ раз эту процедуру, получим, что
	\begin{multline*}
		\norm{f(x)-\gd}
		\le\Cn\frac1{\left(\Co\delta\right)^{r(r-1)}}
			\sup_{\substack{0\le t_i\le\delta\\ i=1,\dots,r}}
				\norm{\Dl{\arr t r}r{f,x}}\\
		\le\lastC\frac1{\left(\Co\delta\right)^{r(r-1)}}\w.
	\end{multline*}
	
	Таким образом, для $0<\delta\le\frac\pi2$,
	из этого неравенства и неравенства~\eqref{eq:Dx-w}
	следует, что
	\begin{multline*}
		I_\delta=\norm{f(x)-\gd}+\delta^{2r}\norm{\Dx^r\gd}\\
		\le\Cn\frac1{\left(\Co\delta\right)^{r(r-1)}}\w.
	\end{multline*}
	Тем самим доказано левое неравенство теоремы для
	$0<\delta\le\frac\pi2$.
	
	Поскольку для $\frac\pi2\le\delta<\pi$
	имеем
	$\delta^2<\pi^2\cdot1$ и $1<\frac\pi2$,
	то
	\begin{multline*}
		I_\delta\le\pi^{2r}
			\Bigl(
				\norm{f(x)-g_{1,r}(x)}
				+1\cdot\norm{\Dx^r g_{1,r}(x)}
			\Bigr)\\
		\le\Cn\wpar{f,1}
		\le\lastC\frac1{\left(\Co\delta\right)^{r(r-1)}}\w.
	\end{multline*}
	
	Для $\delta=0$ левое неравенство теоремы тривиально.
	
	Итак для любого $0\le\delta<\pi$
	доказано левое неравенство теоремы.
	
	Теорема~\ref{th:w-K} полностью доказана.
\end{proof}

\begin{thm}\label{th:jackson}
	Пусть даны числа~$p$, $\alpha$ и~$r$ такие,
	что $\allp$,
	$r\in\numN$;
	\begin{alignat*}2
		\frac12      &<\alpha\le1
		  &\quad &\text{при $p=1$},\\
		1-\frac1{2p} &<\alpha<\frac32-\frac1{2p}
		  &\quad &\text{при $1<p<\infty$},\\
		1            &\le\alpha<\frac32
		  &\quad &\text{при $p=\infty$}.
	\end{alignat*}
	Пусть $f\in\Lp$.
	Тогда для любого натурального~$n$
	справедливы неравенства
	\begin{displaymath}
		\Cn\E\le\wpar{f,\frac1n}
		\le\Cn\frac1{n^{2r}}\sum_{\nu=1}^n\nu^{2r-1}\Epar\nu f,
	\end{displaymath}
	где положительные постоянные~$\prevC$ и~$\lastC$
	не зависят от~$f$ и~$n$.
\end{thm}

\begin{proof}
	Для любой функции $g(x)\in\AD$,
	применяя лемму~\ref{lm:E-D},
	имеем
	\begin{displaymath}
		\E\le\Epar n{f-g}+\Epar n g
		\le\norm{f-g}+\Cn\frac1{n^{2r}}\norm{\Dx^r g(x)},
	\end{displaymath}
	где постоянная~$\lastC$ не зависит от~$g$ и~$n$.
	Отсюда, переходя к точной нижней граны
	по всем $g(x)\in\AD$,
	получим
	\begin{displaymath}
		\E\le\Cn\Kpar{f,\frac1n}.
	\end{displaymath}
	Применяя теорему~\ref{th:w-K},
	получаем, что
	\begin{displaymath}
		\E\le\Cn\left(\cos^4\frac1{2n}\right)^{-r(r-1)}
			\wpar{f,\frac1n}
		\le\Cn\wpar{f,\frac1n}.
	\end{displaymath}
	
	Левое неравенство теоремы доказано.
	
	Докажем правое неравенство теоремы.
	Пусть~$P_n(x)$ алгебраический многочлен наилучшего приближения для~$f$,
	степени не выше, чем $n-1$.
	Пусть~$k$ выбрано так, что
	\begin{displaymath}
		2^k\le n<2^{k+1}.
	\end{displaymath}
	Из теоремы~\ref{th:w-K},
	учитывая, что $P_{2^k}(x)\in\AD$,
	следует, что
	\begin{multline}\label{eq:w-DxP}
		\wpar{f,\frac1n}
		\le\Cn\left(\cos\frac1{2n}\right)^{-4r}
			\Kpar{f,\frac1n}\\
		\le\Cn
			\left(
				\Epar{2^k}f+\frac1{n^{2r}}\norm{\Dx^r P_{2^k}(x)}
			\right).
	\end{multline}
	Так как,
	\begin{displaymath}
		\Dx^r P_{2^k}(x)
		=\sum_{\nu=0}^{k-1}\Dx^r(P_{2^{\nu+1}}(x)-P_{2^\nu}(x)),
	\end{displaymath}
	то применяя $r$~раз
	следствие из леммы~\ref{lm:bernshtein-markov},
	получаем
	\begin{multline*}
		\norm{\Dx^r P_{2^k}(x)}
		\le\Cn\sum_{\nu=0}^{k-1}2^{2(\nu+1)r}
			\norm{P_{2^{\nu+1}}-P_{2^\nu}}\\
		\le2\lastC\sum_{\nu=0}^{k-1}2^{2(\nu+1)r}\Epar{2^\nu}f.
	\end{multline*}
	Поэтому, учитывая неравенство~\eqref{eq:w-DxP},
	\begin{displaymath}
		\wpar{f,\frac1n}
		\le\Cn\frac1{n^{2r}}
			\sum_{\nu=0}^k 2^{2(\nu+1)r}\Epar{2^\nu}f.
	\end{displaymath}
	
	Теперь, замечая, что для $\nu=1,\dots,k$
	\begin{displaymath}
		\sum_{j=2^{\nu-1}}^{2^\nu-1}j^{2r-1}\Epar jf
		\ge2^{2(\nu+1)r-4r}\Epar{2^\nu}f,
	\end{displaymath}
	находим
	\begin{multline*}
		\wpar{f,\frac1n}
		\le\Cn\frac1{n^{2r}}
			\biggl(
			  2^{2r}\Epar1f
			  +\sum_{\nu=1}^k
				\sum_{j=2^{\nu-1}}^{2^\nu-1}j^{2r-1}\Epar jf
			\biggr)\\
		\le\Cn\frac1{n^{2r}}\sum_{\nu=1}^n\nu^{2r-1}\Epar\nu f.
	\end{multline*}
	
	Теорема~\ref{th:jackson} доказана.
\end{proof}


\begin{thebibliography}{9}
	\bibitem{erdelyi-m-o-t:transcendental}
	Г.~Бейтмен, А.~Эрдейи,
		\emph{Высшие трансцендентные функции},
		Москва, 1969.
	
	\bibitem{butzer-s-w:c-80}
	P.~L.\ Butzer, R.~L.\ Stens, M.~Wehrens,
	  \emph{\foreignlanguage{english}
	  {Higher order moduli of continuity based on the Jacobi
	  translation operator and best approximation}},
	  C.\ R.\ Math.\ Rep.\ Acad.\ Sci.\ Canada
		\textbf{2} (1980), no.~2, 83--88.
	
	\bibitem{pawelke:acta-72}
	S.~Pawelke, \emph{Ein Satz vom Jacksonschen Typ f\"ur
	  algebraische Polynome},
	  Acta Sci.\ Math.\ (Szeged) \textbf{33} (1972), no.~3--4,
		323--336.
	
	\bibitem{potapov:vestnik-60a}
	М.~К.\ Потапов, \emph{Некоторые неравенства для полиномов
	  и их производных},
		Вестник МГУ, сер.\ мат.\ (1960), N.~2, 10--20.
	
	\bibitem{potapov:vestnik-83}
	\bysame, \emph{О приближении алгебраическими многочленами
	  в интегральной метрике с весом Якоби},
		Вестник МГУ, сер.\ мат.\ (1983), N.~4, 43--52.
	
	\bibitem{p-berisha:anal-99}
	M.~K.\ Potapov, F.~M.\ Berisha,
	  \emph{\foreignlanguage{english}
	  {Direct and inverse theorems of approximation theory
	  for a generalized modulus of smoothness}},
	  Anal.\ Math.\ \textbf{25} (1999), no.~3, 187--203.
	
	\bibitem{potapov-f:trudy-85}
	М.~К.\ Потапов, В.~M.\ Федоров,
	  \emph{О теоремах Джексона для обобщенного модуля гладкости},
	  Тр.\ мат.\ ин.-та АН СССР, \textbf{172} (1985), 291--298, 355.
\end{thebibliography}
\end{document}